\long\def\symbolfootnote[#1]#2{\begingroup%
	\def\thefootnote{\fnsymbol{footnote}}\footnote[#1]{#2}\endgroup}
\newcommand{\Z}{\ensuremath{\mathcal{Z}}}
\newcommand{\psl}{\textup{PSL}}
\def\imod#1{\allowbreak\mkern10mu({\operator@font mod}\,\,#1)}
\newtheorem{theorem}{Theorem}[section]
\newtheorem{lemma}[theorem]{Lemma}
\newtheorem{proposition}[theorem]{Proposition}
\newtheorem*{theorem*}{Theorem}
\theoremstyle{definition}
\newtheorem{remark}[theorem]{Remark}
\newtheorem{question}[theorem]{Question}
\newtheorem{example}[theorem]{Example}
\newtheorem{conjecture}[theorem]{Conjecture}
\numberwithin{equation}{section}
\newcommand{\ignore}[1]{}
\newcommand{\mynote}[1]{}
\begin{document}

	\title{Counterexamples to a conjecture of M. Pellegrini and P. Shumyatsky}
	\author{Rijubrata Kundu}
	\address{Indian Institute of Science Education and Research Mohali, Knowledge City, Sector 81, Mohali 140 306, India}
	\email{rijubrata8@gmail.com}
	\author{Sumit Chandra Mishra}
	\address{Indian Institute of Science Education and Research Mohali, Knowledge City, Sector 81, Mohali 140 306, India}
	\email{sumitcmishra@gmail.com}
	\thanks{The authors would like to acknowledge the support of IISER Mohali institute postdoctoral fellowship during this work}
	\subjclass[2010]{20B05, 20B30, 20D06}
	\today 
	\keywords{alternating groups, centralizers, involutions, cosets, Sylow subgroups}
	\begin{abstract}
		In this article, we provide counterexamples to a conjecture of M. Pellegrini and P. Shumyatsky which states that each coset of the centralizer of an involution in a finite non-abelian simple group $G$ contains an odd order element, unless $G=\psl(n,2)$ for $n\geq 4$. More precisely, we show that the conjecture does not hold for the alternating group $A_{8n}$ for all $n\geq 2$.
	\end{abstract}
	
	\maketitle
	
	\section{Introduction}
	
	The subgroup of a group $G$ generated by the set of commutators $\{[a,b]=aba^{-1}b^{-1}\mid a,b\in G\}$ in $G$ is called the commutator subgroup of $G$, and is usually denoted by $[G,G]$ or $G'$. The subgroup $G'$ is  normal in $G$, and therefore for a non-abelian simple group $G$, we have $G'=G$. It was proved by O. Ore in \cite{ore} that any element in the alternating group $A_n$ $(n\geq 5)$ is a single commutator. Further, he conjectured that any element of a finite non-abelian simple group is a single commutator. This conjecture, called the Ore's conjecture, is now solved in the affirmative owing to the contributions of several mathematicians (see \cite{lost}). We refer the readers to an excellent article by G. Malle (see \cite{ma} and the references therein) on Ore's conjecture.
	
	\medskip
	
	For a finite group $G$, an element $g\in G$ is called a coprime commutator if $g=[x,y]$ where $x,y\in G$ are elements whose orders are coprime. An element $x\in G$ is called an involution if its order is two. The authors in \cite{ps} proved the following:
	
	\begin{theorem}[Theorem 1.1, \cite{ps}]
		Let $q>3$ be a prime-power. Then every element of $\psl(2,q)$ is a coprime commutator.
	\end{theorem}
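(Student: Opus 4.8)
The plan is to lift the problem to $\mathrm{SL}(2,q)$ and exploit the Fricke trace identity, which for $A,B\in\mathrm{SL}(2,q)$ reads
\[
\mathrm{tr}[A,B]=(\mathrm{tr}\,A)^2+(\mathrm{tr}\,B)^2+(\mathrm{tr}\,AB)^2-(\mathrm{tr}\,A)(\mathrm{tr}\,B)(\mathrm{tr}\,AB)-2 .
\]
Writing $\alpha=\mathrm{tr}\,A$, $\beta=\mathrm{tr}\,B$, $\gamma=\mathrm{tr}\,AB$ and fixing a lift of the target $g$ of trace $\tau$, the requirement $[A,B]=g$ forces, at the level of traces, the quadratic
\[
\gamma^2-\alpha\beta\,\gamma+(\alpha^2+\beta^2-2-\tau)=0 ,
\]
whose discriminant is $\Delta=\alpha^2\beta^2-4(\alpha^2+\beta^2-2-\tau)$. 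The strategy is threefold: first choose the traces $\alpha,\beta$ so that $A$ and $B$ have coprime orders; then arrange that $\Delta$ is a square in $\mathbb{F}_q$, so that a root $\gamma\in\mathbb{F}_q$ exists; and finally invoke Macbeath's realizability theorem, which produces a pair $A,B\in\mathrm{SL}(2,q)$ with the prescribed traces $(\alpha,\beta,\gamma)$ and yields an absolutely irreducible pair precisely when $\tau\neq2$.

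To control the orders, recall the element orders of $\psl(2,q)$: the unipotents have order $p$ (where $q=p^f$), the split semisimple elements order dividing $(q-1)/d$, and the non-split ones order dividing $(q+1)/d$, with $d=\gcd(2,q-1)$. A short computation gives $\gcd\big((q-1)/d,(q+1)/d\big)=1$, and both factors are coprime to $p$. Hence coprimeness comes for free once $A,B$ are taken of two different types: for instance $A$ unipotent (order $p$) and $B$ semisimple, or $A$ split and $B$ non-split semisimple. Since a semisimple element of trace $\theta=\nu+\nu^{-1}$ has order equal to that of $\nu$, prescribing $\alpha,\beta$ as such traces simultaneously pins down the orders and keeps them coprime.

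Taking $A$ unipotent ($\alpha=2$) collapses the discriminant to $\Delta=4(\tau-2)$, so this choice succeeds exactly when $\tau-2$ is a square in $\mathbb{F}_q$. When $\tau-2$ is a non-square I instead let both $A$ and $B$ be semisimple of the two coprime types and use the two free parameters $\alpha,\beta$ to make $\Delta$ a square; as $\alpha,\beta$ range over large subsets of $\mathbb{F}_q$ and the squares have density about $1/2$, a Weil-type counting estimate yields an admissible pair for all but finitely many $q$, the remaining small $q$ being checked directly. The unipotent targets ($\tau=2$, the singular case of Macbeath's theorem) are handled separately by an explicit reducible construction: with $A=\diag(a,a^{-1})$ and $B$ a transvection one computes $[A,B]$ to be a transvection with parameter $a^2-1$, and varying $a$ (together with conjugation and, if needed, the second torus) realizes every unipotent class while keeping $|A|\mid q-1$ coprime to $|B|=p$. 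In characteristic two the square-discriminant criterion is replaced by the Artin--Schreier solvability condition for the same quadratic, but the architecture of the argument is unchanged.

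Finally I descend from $\mathrm{SL}(2,q)$ to $\psl(2,q)$, tracking the sign ambiguity $\tau\leftrightarrow-\tau$ in the choice of lift and verifying that the images $\bar A,\bar B$ retain coprime orders (the orders at worst halve, within the $(q-1)$- and $(q+1)$-parts separately). I expect the main obstacle to be the \emph{simultaneous} satisfaction of the three constraints --- coprime orders of $A$ and $B$, solvability of the quadratic, and irreducibility/realizability of the pair --- especially for the non-square traces and for small $q$, where the counting margin is thin and a handful of conjugacy classes may require ad hoc verification, either by direct matrix computation or from the known character table of $\psl(2,q)$.
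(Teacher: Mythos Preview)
The present paper does not itself prove this theorem; it is quoted from \cite{ps} as motivation, and the only information given here about the original argument is that Theorem~1.2 --- every coset of the centralizer of an involution in $\psl(2,q)$ contains an odd-order element --- ``played a crucial role'' in it. So the route taken in \cite{ps} is via the structure of involution centralizers and their cosets, producing the coprime pair (involution, odd-order element) from that coset statement. Your proposal is an entirely different programme: lift to $\mathrm{SL}(2,q)$, use the Fricke trace identity to reduce the existence of $A,B$ with $[A,B]$ in a given class to solving a quadratic in $\gamma$, choose the free traces $\alpha,\beta$ to force coprime orders, and invoke Macbeath to realize the triple. Both approaches are legitimate; yours is more computational and treats conjugacy classes one trace at a time, whereas the centralizer-coset method in \cite{ps} is structural and uniform across classes.

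That said, what you have written is a strategy rather than a proof, and several of the steps are genuine gaps rather than routine omissions. The heart of the matter is the case where $\tau-2$ is a non-square: you assert that a ``Weil-type counting estimate'' produces $(\alpha,\beta)$ with $\alpha$ a split trace, $\beta$ a non-split trace, and $\Delta(\alpha,\beta)$ a square, but no such estimate is stated, let alone proved --- you would need an explicit character-sum bound with an explicit threshold in $q$, and then an actual verification for the finitely many $q$ below that threshold. Likewise ``the characteristic-two case \dots\ the architecture of the argument is unchanged'' hides real work: in characteristic $2$ the trichotomy unipotent/split/non-split behaves differently (for instance $\alpha=2=0$ no longer isolates unipotents), the Artin--Schreier solvability condition must be made concrete, and the order-coprimeness bookkeeping in the descent to $\psl$ changes since $d=1$. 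Finally, the explicit handling of the singular case $\tau=2$ needs to cover both unipotent conjugacy classes when $q$ is odd (the transvection parameters split into square and non-square orbits), and you should also address $\tau=-2$ at the $\mathrm{SL}$ level before descent. None of this looks insurmountable, but as written the proposal would not be accepted as a proof.
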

	In the same article, they conjecture that the above holds true for all finite non-abelian simple groups. The following result played a crucial role in the proof of the above theorem.
	
	\begin{theorem}[Theorem 1.2,  \cite{ps}]
	Let $q>3$ be a prime-power. Then every coset of the centralizer of an involution in $\psl(2,q)$ has an odd order element.
	\end{theorem}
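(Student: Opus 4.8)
The plan is to show, for an arbitrary involution $t$ and an arbitrary $g\in G:=\psl(2,q)$, that the coset $gC$ of $C:=C_G(t)$ contains an element of odd order; since conjugation merely permutes the cosets of a fixed $C$, I may replace $t$ by any convenient conjugate before starting. First I would record the shape of $C$. Writing $G=\mathrm{SL}(2,q)/\{\pm I\}$, an involution lifts to a matrix $A$ with $A^2=\pm I$, and a short computation shows that, up to conjugacy, $C$ is elementary abelian of order $q$ (the unipotent upper-triangular matrices) when $q$ is even, and dihedral of order $q-1$ or $q+1$ (the image of the normaliser of a split, resp.\ non-split, maximal torus) according as $q\equiv 1$ or $q\equiv 3\pmod 4$. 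In every case $v_2(|C|)=v_2(|G|)$, so $C$ contains a full Sylow $2$-subgroup and $[G:C]$ is odd; I would stress, however, that this feature alone cannot force the conclusion, since the counterexamples of the present paper in $A_{8n}$ share exactly this property. The argument must therefore exploit the detailed structure of the tori.

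For $q$ even I would take $t=\overline{u}$ with $u=\left(\begin{smallmatrix}1&1\\ 0&1\end{smallmatrix}\right)$, so that $C=\{\overline{u_b}:b\in\mathbb{F}_q\}$, $u_b=\left(\begin{smallmatrix}1&b\\ 0&1\end{smallmatrix}\right)$, is itself the Sylow $2$-subgroup. Here $\psl=\mathrm{SL}$, and an element has odd order exactly when it is semisimple, i.e.\ when its trace is nonzero (in characteristic $2$ a trace-zero unimodular matrix is unipotent). Writing $g=\left(\begin{smallmatrix}a&b'\\ c&d\end{smallmatrix}\right)$ one computes $\mathrm{tr}(gu_b)=\mathrm{tr}(g)+cb$, so if $c\neq0$ I can choose $b$ with $cb\neq\mathrm{tr}(g)$ and obtain a semisimple, hence odd-order, element of $gC$; if $c=0$ then either $g\in C$ (and the identity lies in the coset) or $\mathrm{tr}(g)=a+a^{-1}\neq0$ and $g$ already has odd order. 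This disposes of the even case cleanly.

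The odd case rests on one arithmetic observation. For $q\equiv1\pmod4$ the image of a non-split torus is cyclic of order $(q+1)/2$, which is odd, so every non-split semisimple element of $G$ has odd order, while unipotent elements have order $p$, also odd. Hence for $q\equiv1\pmod4$ it suffices to produce in each coset an element that is unipotent or non-split, i.e.\ a lift $h$ with $\mathrm{tr}(h)^2-4$ a non-square or zero; dually, for $q\equiv3\pmod4$ the split torus image has odd order $(q-1)/2$ and it suffices to make $\mathrm{tr}(h)^2-4$ a (possibly zero) square. Taking $t$ so that $C$ is the image of the torus normaliser $N=T\sqcup Tw$, I would sweep over the coset by multiplying $g=\left(\begin{smallmatrix}a&b\\ c&d\end{smallmatrix}\right)$ by the torus and by the reflections, obtaining the two families of attainable traces $\{as+ds^{-1}:s\in\mathbb{F}_q^{\ast}\}$ and $\{cs-bs^{-1}:s\in\mathbb{F}_q^{\ast}\}$. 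The problem thus reduces to showing that at least one attainable $\tau$ has $\tau^2-4$ of the prescribed quadratic character.

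The main obstacle is exactly this last step, a simultaneous quadratic-character condition. Using $\det g=1$ one has, when $a,d\neq0$, that $\tau=as+ds^{-1}$ is attainable iff $\tau^2-4ad$ is a square, with $ad=1+bc$, so I must find $\tau$ with $\tau^2-4ad$ a square and $\tau^2-4$ of the opposite (or, for $q\equiv3$, the same) character; the degenerate cases $a=0$ or $d=0$ give an even larger trace set, and the second family $\{cs-bs^{-1}\}$ covers the antidiagonal case. I would count the good $\tau$ by expanding $\tfrac14\sum_\tau(1+\chi(\tau^2-4ad))\,(1\mp\chi(\tau^2-4))$ and bounding the cross term $\sum_\tau\chi\big((\tau^2-4ad)(\tau^2-4)\big)$, a character sum attached to a quartic, by the Weil estimate $O(\sqrt q)$. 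This yields roughly $q/4+O(\sqrt q)$ admissible values, positive once $q$ exceeds a small explicit bound, after which the finitely many remaining prime powers (the source of the hypothesis $q>3$) are settled by direct inspection. Securing a clean, self-contained bound valid all the way down to $q>3$, rather than merely for large $q$, is where the real care is required.
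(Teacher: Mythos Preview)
This theorem is not proved in the present paper at all: it is merely quoted from \cite{ps} as background for Conjecture~\ref{mainconjecture}, and the paper's own contribution goes in the opposite direction (producing counterexamples in $A_{8n}$). There is therefore no proof here to compare your proposal against; for the actual argument you would need to consult Pellegrini and Shumyatsky's original article.

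On the substance of your outline: the even-$q$ case is essentially complete and correct, since the trace formula $\mathrm{tr}(gu_b)=\mathrm{tr}(g)+cb$ together with the observation that nonzero trace forces semisimplicity (hence odd order) in characteristic $2$ disposes of it immediately. For odd $q$ your reduction to a quadratic-character condition on the trace is the right idea, and the Weil estimate on $\sum_\tau\chi\big((\tau^2-4ad)(\tau^2-4)\big)$ does yield roughly $q/4$ good values of $\tau$ for large $q$. The places where real work remains are: the case $q\equiv 3\pmod 4$, where $C$ comes from the \emph{non-split} torus and your parametrisation by $\mathrm{diag}(s,s^{-1})$ over $\mathbb{F}_q$ is not directly available; the degenerate configurations $ad=1$ (the quartic then has a repeated factor and the Weil bound must be re-examined) and $a=0$ or $d=0$; and the explicit handling of small $q$, which you acknowledge but do not carry out. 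None of these is a fatal obstruction, but each would need to be written out before the argument could be called complete.
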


	In the light of the above result, they further conjectured the following:
	
	\begin{conjecture}\label{mainconjecture}[Conjecture 1.4,  \cite{ps}]
		Each coset of the centralizer of an involution in a finite non-abelian simple group has an odd order element, unless $G=\psl(n,2)$ for $n\geq 4$.
	\end{conjecture}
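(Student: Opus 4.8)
The paper's aim is to refute Conjecture~\ref{mainconjecture}, so what I would actually prove is its negation for $G=A_{8n}$: I would produce an involution $t\in A_{8n}$ and an element $g\in A_{8n}$ so that every element of the coset $gC_{G}(t)$ has even order. The first move is a reformulation. If $x\in gC_{G}(t)$ then $g^{-1}x\in C_{G}(t)$ commutes with $t$, which rearranges to $x t x^{-1}=g t g^{-1}=:t'$; conversely any such $x$ lies in the coset. Thus $gC_{G}(t)$ is precisely the set of elements conjugating $t$ to the fixed conjugate $t'$, and I must arrange that none of these has odd order. Here I would use the elementary but decisive observation that a permutation has odd order if and only if all of its cycles have odd length, so an odd-order permutation is automatically even; consequently it is harmless to search for conjugators inside $S_{8n}$ rather than $A_{8n}$, and the $A_{8n}$-conjugacy of $t$ and $t'$ is automatic (their common centralizer in $S_{8n}$ contains a single transposition, hence is not contained in $A_{8n}$, so the $S_{8n}$-class does not split). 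The problem is now purely combinatorial: exhibit two fixed-point-free involutions $t,t'$ on $8n$ points, each a product of $4n$ transpositions, such that every $x$ with $xtx^{-1}=t'$ contains a cycle of even length.

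To build the obstruction I would work with the pair structure. Writing $t=\prod_{i}(a_i\,b_i)$ and $t'=\prod_j(c_j\,d_j)$, a conjugator $x$ carries each $t$-pair $\{a_i,b_i\}$ to a $t'$-pair, inducing a bijection of the $4n$ pairs together with, for each pair, a $\pm$ choice recording whether the two points are sent ``straight'' or ``crossed''. The key local computation is that a cycle of the induced pair-permutation of length $\ell$ lifts, inside $x$, to either two disjoint $\ell$-cycles or a single $2\ell$-cycle, according to whether the product of the $\pm$ signs around that pair-cycle is $+1$ or $-1$. Hence $x$ has odd order precisely when (i) the induced pair-permutation has all cycles of odd length and (ii) the sign-product around every pair-cycle is $+1$. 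The plan is to choose $t$ and $t'$ so that, for every admissible pair-bijection, at least one cycle violates (i) or (ii); the freedom in $C_G(t)$ corresponds exactly to the freedom in choosing the pair-bijection together with the signs, so I need a single invariant that no such choice can evade.

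The concrete construction would interleave the two matchings so that their union graph $M_t\cup M_{t'}$ is rigid: I would pick $t,t'$ whose union decomposes into components forcing a fixed parity, and it is here that the factor $8$ and the hypothesis $n\ge 2$ enter, guaranteeing enough components to make the parity count genuinely obstructive, the small cases having too little room. I expect the main difficulty to be exactly this global step: the centralizer $C_G(t)$ is large, so there are very many conjugators, and I must rule out odd order for all of them simultaneously rather than for a convenient subfamily. I would control this by passing to a Sylow $2$-subgroup $P$ of $G$ contained in $C_G(t)$, legitimate once the chosen $t$ is $2$-central, realizing $t,t'$ inside $P$ and reducing the search for an odd conjugator to a fusion statement in $N_G(P)$; the explicit iterated-wreath-product description of $P$ in degree $8n$ should then let me verify that the required sign/parity invariant is preserved, so that no odd-order conjugator can exist. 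Assembling these pieces would give, for each $n\ge 2$, an involution $t$ and a coset $gC_{A_{8n}}(t)$ all of whose elements have even order, contradicting Conjecture~\ref{mainconjecture}.
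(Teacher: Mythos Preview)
Your plan has a genuine gap: you never produce the pair $t,t'$, and the structural choice you commit to makes it unlikely that any such pair exists. You insist on \emph{fixed-point-free} involutions (``each a product of $4n$ transpositions'') and then hope to take $t$ $2$-central so as to invoke a Sylow/fusion argument. But the paper's own computations in the final section show that for the $2$-central involutions $(1\,2)\cdots(7\,8)$ in $A_8$ and $(1\,2)\cdots(15\,16)$ in $A_{16}$, \emph{every} nontrivial coset of the centralizer contains an odd-order element. So the very class of involutions you reach for appears not to yield counterexamples; the Sylow/fusion reduction would be reducing to a statement that is false. There is also a technical wobble in your ``pair-permutation'' analysis: the induced map sends $t$-pairs to $t'$-pairs, two different partitions of $[8n]$, so speaking of its ``cycles of length $\ell$'' is not well-defined without a further identification you never specify. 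The analysis you sketch is correct for elements of $C_G(t)$ itself (where $t$-pairs go to $t$-pairs), but that is the trivial coset.

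The paper's argument avoids all of this by choosing $t$ with \emph{many} fixed points rather than none. Take $t=(1\,2)(3\,4)\cdots(4n{-}1\ 4n)$, supported on $A=\{1,\dots,4n\}$ and fixing $B=\{4n{+}1,\dots,8n\}$ pointwise. Any $z\in C_{A_N}(t)$ must preserve the fixed-point set of $t$, hence $z(A)=A$ and $z(B)=B$. Now let $y=(1\ 4n{+}1)(2\ 4n{+}2)\cdots(4n\ 8n)$, which interchanges $A$ and $B$ as blocks. Then every element $yz$ of the coset sends $A$ to $B$ and $B$ to $A$, so along any cycle of $yz$ the entries alternate between $A$ and $B$; every cycle therefore has even length and $yz$ has even order. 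That is the entire proof. The moral is that the counterexample comes from an involution far from the centre of a Sylow $2$-subgroup, precisely so that its centralizer is forced to respect a block decomposition that a single coset representative can then flip.
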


	They showed that $G=\psl(n,2)$ for $n\geq 4$ is indeed an exception to the above conjecture. In \cite{sh}, the author showed that every element in the alternating group $A_n$ $(n\geq 5)$ is a coprime commutator by direct computations. Recently, G. Zini (see \cite{zi}) has verified Conjecture~\ref{mainconjecture} for the Suzuki groups $^2B_2(q)$ where $q=2^{2n+1}$ $(n\geq 1)$. Further, he uses this result to prove that every element in the simple group $^2B_2(q)$ is a coprime commutator.
	
	In this article, we consider the alternating groups $A_{8n}$ $(n\geq 1)$. For each of these groups, we exhibit an involution and a coset of the centralizer of that involution consisting entirely of even order elements. This provides an infinite family of counterexamples to Conjecture~\ref{mainconjecture}. The main theorem is as follows:
	\begin{theorem}\label{main-theorem}
		Let $N=8n$,  $n\geq 1$. Let $x=(1\;2)(3\;4)\cdots(4n-1\;4n)\in A_N$. Let $\mathcal{Z}_{A_N}(x)$ denote the centralizer of $x$ in $A_N$. Let $y=(1\;4n+1)(2\;4n+2)\cdots(4n\;8n)\in A_N$. Then each element in the coset $y\mathcal{Z}_{A_N}(x)$ is a product of disjoint cycles, each of even length. In particular, each element in the coset $y\mathcal{Z}_{A_N}(x)$ has even order. 
	\end{theorem}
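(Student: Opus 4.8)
The plan is to exploit a single partition of the underlying set $\Omega=\{1,2,\ldots,8n\}$ into the support $L=\{1,\ldots,4n\}$ of $x$ and the complementary set $U=\{4n+1,\ldots,8n\}$ of points fixed by $x$. I would prove that \emph{every} element of the coset $y\mathcal{Z}_{A_N}(x)$ interchanges $L$ and $U$, and then read off the cycle structure from this one structural fact. The heart of the argument is recognizing this $L$--$U$ interchange; once it is in place, the even length of every cycle is immediate.

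First I would record the relevant property of the centralizer. Any permutation commuting with $x$ conjugates $x$ to itself, hence permutes the cycles of $x$ while preserving their lengths. The length-$1$ cycles of $x$ are exactly the points of $U$ and the length-$2$ cycles have supports whose union is $L$, so every $z\in\mathcal{Z}_{S_N}(x)$ — and a fortiori every $z\in\mathcal{Z}_{A_N}(x)$ — satisfies $z(L)=L$ and $z(U)=U$. This is the only feature of the centralizer I will use; the full wreath-product description $\mathcal{Z}_{S_N}(x)\cong S_{4n}\times(C_2\wr S_{2n})$ is not needed. Next I would observe that $y$ swaps the two halves: since $y$ sends each $i\in L$ to $i+4n\in U$ and each $u\in U$ to $u-4n\in L$, we have $y(L)=U$ and $y(U)=L$. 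Combining the two observations, for any $z\in\mathcal{Z}_{A_N}(x)$ the element $g=yz$ satisfies $g(L)=y(z(L))=y(L)=U$ and likewise $g(U)=L$.

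The final step is the cycle-structure conclusion. I would take any cycle $(c_0\;c_1\;\cdots\;c_{\ell-1})$ of $g$, indexed so that $g(c_j)=c_{j+1}$ with subscripts read modulo $\ell$. Because $g$ interchanges $L$ and $U$, consecutive points $c_j$ and $c_{j+1}$ always lie in opposite halves, so the cycle alternates between $L$ and $U$. For the cycle to close up, $c_0$ and $c_\ell=c_0$ must lie in the same half, which forces $\ell$ to be even; note $|L|=|U|=4n$ ensures this alternation is globally consistent. Hence every cycle of $g$ has even length, and the order of $g$, being the least common multiple of even numbers, is even. I do not anticipate a genuine obstacle: the argument is entirely elementary once the partition is identified, and the only point that genuinely demands care is the standard claim that elements centralizing $x$ preserve both its support $L$ and its fixed-point set $U$.
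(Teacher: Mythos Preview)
Your proposal is correct and follows essentially the same route as the paper: both partition $\{1,\ldots,8n\}$ into the support $L=A$ of $x$ and its fixed-point set $U=B$, argue that every $z\in\mathcal{Z}_{A_N}(x)$ stabilizes this partition while $y$ swaps the two blocks, and then conclude that every cycle of $yz$ alternates between the blocks and hence has even length. The only cosmetic difference is in justifying that the centralizer preserves $L$ and $U$: you invoke the standard fact that conjugation by a centralizing element permutes the cycles of $x$ length-by-length, whereas the paper gives a short direct contradiction argument; the content is the same.
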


	\begin{remark}
		A permutation $\pi$ has even order if and only if it contains an even length cycle in its disjoint cycle decomposition. Thus, in order to disprove the conjecture we just need to show that every element in $y\mathcal{Z}_{A_N}(x)$ has  at least one cycle of even length in its disjoint cycle decomposition. However, in the above theorem we prove a much stronger statement that each element in the  coset $y\mathcal{Z}_{A_N}(x)$ is a product of disjoint cycles, each of even length.
	\end{remark}
	
	The proof of the above theorem is in the next section. In the final section, we make some comments on another related conjecture of Pellegrini and Shumyatsky. We also comment on a related question of Zappa in the same section.
	\section{Proof of the main theorem}
	
	We fix some notations which will be used throughout the paper. Let $N=8n$, $n\geq 1$. Let $[N]$ denote the set $\{1,2,\ldots,N\}$. We consider the alternating group $A_N$ on $[N]$.  Let $x=(1\;2)(3\;4)\cdots(4n-1\;4n) \in A_N$. Let $\mathcal{Z}_{A_n}(x)$ be the centralizer of $x$ in $A_N$. Take $y=(1\;4n+1)(2\;4n+2)\cdots(4n\;8n) \in A_N$. We divide $[N]$ into two disjoint subsets $A=\{1,2,\ldots, 4n\}$ and  $B=\{4n+1,4n+2,\ldots,8n\}$ each of size of $4n$. We say that a permutation $\pi \in S_N$ fixes $A$ if for all $a\in A$, $\pi(a)\in A$. This immediately implies $\pi$ fixes $B$ as well. Products of permutations will be executed from right to left. 
	
	\medskip
	
	 We start with a general lemma.
	
	\begin{lemma}\label{lemma}
		Let $z\in A_N$ be such that $z$ fixes $A$. Then $\tau=yz$ is a product of disjoint cycles each of even length.
	\end{lemma}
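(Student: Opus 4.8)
The plan is to exploit the fact that $y$ interchanges the two blocks $A$ and $B$ of $[N]$, and to show that this swapping property is inherited by $\tau = yz$; the even-length conclusion will then follow from a parity argument on the cycles of $\tau$. First I would record the explicit action of $y$: by definition $y(a) = a + 4n$ for every $a \in A = \{1,\dots,4n\}$ and $y(b) = b - 4n$ for every $b \in B = \{4n+1,\dots,8n\}$, so $y$ restricts to a bijection $A \to B$ and to a bijection $B \to A$.

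Next, since $z$ fixes $A$ (and hence $B$) setwise, I would write $z = z_A z_B$ as a product of two commuting permutations, where $z_A$ permutes $A$ and $z_B$ permutes $B$. Composing, for $a \in A$ we get $\tau(a) = y(z_A(a)) \in B$, because $z_A(a) \in A$ and $y$ sends $A$ into $B$; likewise for $b \in B$ we get $\tau(b) = y(z_B(b)) \in A$. Thus $\tau$ maps $A$ bijectively onto $B$ and $B$ bijectively onto $A$; in other words $\tau$ swaps the two blocks, and in particular $\tau$ has no fixed point, since a fixed point would lie in the same block as its image.

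Finally I would analyze a single cycle of $\tau$. Starting from any $a \in A$, the orbit $a, \tau(a), \tau^2(a), \dots$ alternates between the two blocks: the even iterates stay in $A$ and the odd iterates land in $B$, since each application of $\tau$ toggles the block. For the orbit to close up and return to the starting point $a \in A$, the number of steps must be even; hence the cycle through $a$ has even length. Running over all starting points shows that $\tau$ is a product of disjoint cycles, each of even length.

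The argument is essentially a parity observation, and I do not anticipate a genuine obstacle; the only point to phrase carefully is the assertion that a block-swapping permutation of two equal-sized sets can have no odd cycle, which is precisely the alternation argument above. I would also note that the hypothesis $z \in A_N$, as opposed to merely $z \in S_N$, plays no role in the cycle-structure conclusion — only the fact that $z$ preserves the partition into $A$ and $B$ is used.
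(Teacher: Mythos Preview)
Your argument is correct and is essentially the same as the paper's: both show that $\tau=yz$ interchanges the blocks $A$ and $B$, and then observe that any cycle of such a permutation must alternate between the two blocks and hence have even length. The decomposition $z=z_Az_B$ is a harmless extra flourish, and your closing remark that only $z\in S_N$ (not $z\in A_N$) is needed matches the paper's own observation.
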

	
	\begin{proof}
		Let $z\in A_{N}$ be as in the statement of the lemma. We note that $y(a)\in B$ for all $a\in A$, and $y(b)\in A$ for all $b\in B$. The same holds for $\tau$ since $z$ fixes $A$ and $B$. Let $a_1$ be any element of $A$. Then $\tau(a_1)=b_1$ for some $b_1\in B$. We know that $\tau(b_1)\in A$. If $\tau(b_1)=a_1$, then the cycle containing $a_1$ in the disjoint cycle decomposition of $\tau$ is $(a_1\;b_1)$ which is of even length and we are done. If not, then $\tau(b_1)=a_2$ for some $a_2\in A\setminus\{a_1\}$. Then $\tau(a_2)=b_2$ for some $b_2 \in B\setminus\{b_1\}$. If $\tau(b_2)=a_1$, then the cycle containing $a_1$ is $(a_1\;b_1\;a_2\;b_2)$ which is of even length and we are done. If not, we proceed as before and after a finite number of steps, the cycle containing $a_1$ in the disjoint cycle decomposition of $\tau$ is of the form $(a_1\;b_1\;\cdots\; a_k\;b_k)$ for some $k$, $1\leq k\leq 4n$, which is of even length. Similarly for any $b\in B$, the cycle containing $b$ in the disjoint cycle decomposition of $\tau$ is of even length. This completes the proof.
	\end{proof}
	
	\begin{proof}[Proof of Theorem~\ref{main-theorem}]
		Let $z\in \mathcal{Z}_{A_N}(x)$. We claim that $z$ fixes $A$. Suppose, on the contrary, that $z$ does not fix $A$. 
		Then there exists $a \in A$ such that  $z(a) = b$ for some $b \in B$. Then,
		\begin{eqnarray*}
		&& \;\;\;\;\;\;\;\;\; z^{-1}(b)= a \implies xz^{-1}(b)=a' \; \text{for some $a'\in A\setminus\{a\}$} \\ && \implies zxz^{-1}(b)=z(a') \implies x(b)=z(a') \implies b=z(a').
		\end{eqnarray*}
	This is a contradiction as $a'\neq a$ and $z(a)=b$. This proves our claim that $z$ fixes $A$. The proof of the theorem now follows from Lemma~\ref{lemma}.
	\end{proof}
	
	\begin{remark}
		Both Lemma~\ref{lemma} and Theorem~\ref{main-theorem} remains true if we replace $A_N$ by $S_N$, the symmetric group on $[N]$.
	\end{remark}

	\begin{remark}
		Note that $A_8 \cong \psl(4,2)$ which already appears in Conjecture~\ref{mainconjecture} as an exception. For $n\geq 2$, $A_{8n}$ is not isomorphic to any of the simple groups $\psl(m,2)$ for $m\geq 4$ and hence is a counterexample to the conjecture.
	\end{remark}

	We write an example for the purpose of clarity. Before proceeding with the example we fix a notation. Given $n\in \mathbb{N}$, we denote the cycle type of a permutation $\pi\in S_n$ as $1^{c_1}2^{c_2}\cdots$, where $c_i$ denotes the number of $i$-cycles in the disjoint cycle decomposition of $\pi$. Clearly, $1^{c_1}2^{c_2}\cdots $ satisfies $\sum_{i}ic_i=n$, whence $1^{c_1}2^{c_2}\cdots$ is a partition of $n$.
	\begin{example}
		For $N=8$, we have the following: $x=(1\;2)(3\;4) \in A_8$ and $y=(1\;5)(2\;6)(3\;7)(4\;8)\in A_8$. Then, the centralizer is given by $$\mathcal{Z}_{A_8}(x)=\langle(1\;3)(2\;4), (1\;2)(3\;4), (1\;2)(5,8), (1\;2)(6\;8), (1\;2)(7\;8) \rangle.$$
		We have $|\mathcal{Z}_{A_8}(x)|=96$. We used GAP to get the list of all 96 elements in the coset $y\mathcal{Z}_{A_8}(x)$. The cycle type of the elements occurring in $y\mathcal{Z}_{A_8}(x)$ belongs to the set  $\{2^4, 2^16^1, 4^2\}$ of partitions of 8. We conclude that the elements in $y\mathcal{Z}_{A_8}(x)$ have orders 2,4, and 6. 
	
	For $N=16$, we have the following: $x=(1\;2)(3\;4)(5\;6)(7\;8)\in A_{16}$ and $y=(1\;9)(2\;10)(3\;11)(4\;12)(5\;13)$ $(6\;14)(7\;15)(8\;16)\in A_{16}$. We have $|\mathcal{Z}_{A_{16}}(x)|=7741440$. Using GAP we see that the cycle type of the elements occurring in $y\mathcal{Z}_{A_{16}}(x)$ belongs to the set  
	$$X=\{2^8, 2^44^2, 2^26^2, 2^56^1, 2^24^18^1, 2^310^1, 6^110^1, 4^112^1, 2^114^1, 2^14^26^1, 8^2, 4^4\},$$ of partitions of 16. It immediately follows that the orders of elements  in $y\mathcal{Z}_{A_{16}}(x)$ belongs to the set $\{2,4,6,8,10,12,14,30\}$.
	\end{example}

	\section{Remarks on related questions}
	
	In this final section, we comment on a related conjecture in \cite{ps}, and a question of G. Zappa (see \cite{za}).
	
	\subsection{A related conjecture of Pellegrini and Shumyatsky}
	
	In \cite{ps}, the authors also conjectured the following:
	\begin{conjecture}[Conjecture 1.3, \cite{ps}]\label{conjecture2}
		Let $T$ be a Sylow-2 subgroup of a finite group $G$ and $t$ an involution in $Z(T)$. Then each coset of the centralizer $\Z_G(t)$ contains an odd order element.
	\end{conjecture}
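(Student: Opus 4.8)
The plan is to recast Conjecture~\ref{conjecture2} as a statement about \emph{odd conjugacy} of the involution $t$, to test it against the construction of Theorem~\ref{main-theorem}, and only then to attempt a general argument. For $g\in G$, an element $h\in g\Z_G(t)$ has the form $h=gc$ with $c\in\Z_G(t)$, and a direct computation shows $hth^{-1}=gtg^{-1}$; conversely any $h$ with $hth^{-1}=gtg^{-1}$ lies in $g\Z_G(t)$. Hence the coset $g\Z_G(t)$ contains an odd-order element if and only if the conjugate $gtg^{-1}$ can already be reached from $t$ by conjugation by an element of odd order. Thus Conjecture~\ref{conjecture2} is equivalent to the assertion that, for a $2$-central involution $t$, the full conjugacy class $t^{G}$ coincides with $\{hth^{-1}\mid h\in G,\ |h|\text{ odd}\}$.

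I would first ask whether the involution $x$ of Theorem~\ref{main-theorem} furnishes a counterexample here as well. The answer is no, for a parity reason: Conjecture~\ref{conjecture2} requires $t$ to be $2$-central, i.e. $\Z_G(t)$ must contain a full Sylow $2$-subgroup of $G$. For $G=A_{8n}$ and $t=x$, the centralizer in $S_{8n}$ is $(C_2\wr S_{2n})\times S_{4n}$, of order $2^{2n}(2n)!\,(4n)!$, and $\Z_{A_{8n}}(x)$ is its index-$2$ subgroup of even permutations. By Legendre's formula the exponent of $2$ in $|\Z_{A_{8n}}(x)|$ is $8n-2s(n)-1$, while in $|A_{8n}|=(8n)!/2$ it is $8n-s(n)-1$, where $s(n)$ is the number of $1$'s in the binary expansion of $n$. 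Since $s(n)\geq 1$ these differ, so $\Z_{A_{8n}}(x)$ does not contain a full Sylow $2$-subgroup and $x$ is not $2$-central; the coset $y\Z_{A_N}(x)$ is therefore not of the type to which Conjecture~\ref{conjecture2} applies, and any counterexample must come from a genuinely different source.

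To prove the conjecture I would work through the fusion of $2$-central involutions. The main lever is the dihedral criterion: if $s,t$ are conjugate involutions with $st$ of odd order, then $\langle s,t\rangle$ is dihedral of order $2\,|st|$ and $s,t$ are interchanged by a rotation of odd order, so $s$ is an odd-order conjugate of $t$. The strategy is then to join an arbitrary conjugate $s=gtg^{-1}$ to $t$ by a chain $t=s_0,s_1,\dots,s_k=s$ of conjugates in which each product $s_{i}s_{i+1}$ has odd order, and to compose the resulting odd-order conjugations. Because both $t$ and $s$ are $2$-central, each lies in the center of a Sylow $2$-subgroup, and one would try to build the connecting chain inside suitable $2$-local subgroups, invoking Glauberman's $Z^{\ast}$-theorem and standard control-of-fusion results to guarantee that such a chain exists.

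The main obstacle is that the dihedral criterion is only sufficient, not necessary: two $2$-central conjugate involutions may have a product of even order and yet still be odd-order conjugate through an element lying outside their generated dihedral group. Already in $S_4$ the central involutions $(1\,3)(2\,4)$ and $(1\,2)(3\,4)$ of a Sylow $2$-subgroup multiply to an involution, yet the $3$-cycle $(1\,2\,3)$ conjugates one to the other. Consequently a chain of odd products need not exist even when the conclusion holds, so a proof cannot rest on the dihedral criterion alone; conversely, manufacturing a counterexample would mean exhibiting a $2$-central $t$ and a conjugate $s$ for which \emph{every} element of the realizing coset has even order, which the parity constraints above make delicate to arrange. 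Pinning down odd conjugacy of $2$-central involutions beyond the dihedral regime, precisely where $Z^{\ast}$-type fusion phenomena dominate, is where the real difficulty lies.
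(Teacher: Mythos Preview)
The statement you are trying to prove is not a theorem of the paper but an \emph{open conjecture} that the paper merely quotes from \cite{ps}. The paper offers no proof of it; it only reports GAP verifications that $A_4$, $A_8$ and $A_{16}$ are \emph{not} counterexamples, and notes that in those cases the relevant $2$-central involution is the fixed-point-free one (e.g.\ $(1\;2)(3\;4)(5\;6)(7\;8)$ in $A_8$), not the half-support involution $x$ of Theorem~\ref{main-theorem}. Your Legendre computation showing that $x$ fails to be $2$-central in $A_{8n}$ is correct and is precisely the structural reason behind the paper's GAP observation, so on that point you are in full agreement with the paper, only with a cleaner explanation.

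However, your proposal does not amount to a proof of the conjecture, and you essentially say so yourself. The reformulation in terms of odd-order conjugacy is valid, and the dihedral criterion is a natural first tool, but as you note it is only sufficient: conjugate $2$-central involutions whose product has even order may still be odd-conjugate via elements outside the dihedral group they generate. Your proposed chain argument therefore has a genuine gap, since nothing you invoke (the $Z^{\ast}$-theorem, generic control-of-fusion) guarantees the existence of a chain with odd consecutive products; these results control \emph{where} fusion happens, not the parity of the conjugating elements. In short, your analysis correctly explains why Theorem~\ref{main-theorem} does not touch Conjecture~\ref{conjecture2}, but the positive direction remains exactly as open after your proposal as before, and the paper makes no claim to the contrary.
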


	We mention that the alternating group $A_4$, $A_{8}$ and $A_{16}$ are not counterexamples to the above conjecture. This can be easily verified using GAP. For example, when $G=A_4$, $T=\{id,(1\;2)(3\;4),(1\;3)(2\;4),(1\;4)(2\;3))\}$. There are two non-trivial cosets of the  centralizer $\Z_G((1\;2)(3\;4))$ each of which consists of four 3-cycles. For the groups $A_8$ and $A_{16}$ one needs to compute the cosets of the centralizers $\Z_{A_8}((1\;2)(3\;4)(5\;6)(7\;8))$ and $\Z_{A_{16}}((1\;2)(3\;4)(5\;6)(7\;8)(9\;10)(11\;12)(13\;14)(15\;16))$ in $A_8$ and $A_{16}$ respectively. In each of these case, we observed that every non-trivial cosets contains an odd order element. In fact, every non-trivial coset contains a longest odd length cycle in each of the two cases, that is, 7-cycle in the first case and 15-cycle in the second case.
	
	\subsection{A related question of Zappa}
	
	Let $p$ be a prime. In 1962, G. Zappa (see \cite{za}) posed the following question:
	
	\begin{question}\label{question1}
		Let $G$ be a finite group and $P$ be a Sylow $p$-subgroup of $G$. Can a non-trivial coset $gP$ contain only elements whose orders are powers of $p$? And in that case, can at least one element of $gP$ have order $p$?
	\end{question}

	This question was asked again in \cite{gg}. In \cite{co}, the author has answered this question in the positive for $p=5$ and $|P|=5$. He showed that if $G=\psl(3,4)$ and $P$ be a Sylow $5$-subgroup of $G$, then there exists a non-trivial coset of $P$ whose every element has order 5. This is the smallest example which gives a positive answer to the above question. He further proves that for a finite group $G$, if $|P|=2,3,4,8$, then $G$ cannot give a positive answer to the above question. Thus, $|P|\geq 16$ if $p=2$ and $|P|\geq 5$ if $p$ is odd. It is still an open question whether other examples with the above restrictions on $P$ exists. 
	
	\medskip
	
	It is easy to see that if $G$ is the smallest finite group for a prime $p$ which gives a positive answer to Question~\ref{question1}, then $G$ is a non-abelian finite simple group (see Lemma 4, \cite{co}). The following is proved in \cite{co}.
	\begin{theorem}[Theorem 2, \cite{co}]\label{counter-example-PSL} Let $G$ be a finite group which gives a positive answer to Question~\ref{question1} for a prime $p$. Then $G$ cannot be $\psl(2,q)$ for any prime-power $q$.
	\end{theorem}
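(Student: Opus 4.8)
The plan is to argue by contradiction: assume $G=\psl(2,q)$, with $q=\ell^{f}$ and $\ell$ prime, gives a positive answer to Question~\ref{question1}, so that some non-trivial coset $gP$ (hence $g\notin P$) consists entirely of elements of $p$-power order. I would open by recording two facts that organize the whole argument. First, by the preliminary bounds of \cite{co} we may assume $|P|\geq 5$ for odd $p$ and $|P|\geq 16$ for $p=2$; this removes the small groups $\psl(2,q)$ that are dealt with through exceptional isomorphisms. Second, the order of an element of $\psl(2,q)$ is read off from the trace of any lift to $\mathrm{SL}(2,q)$: a non-identity element is an $\ell$-element precisely when this trace is $\pm 2$, and otherwise its order equals the multiplicative order of an eigenvalue and is coprime to $\ell$. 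Since $p$ need not equal $\ell$, I would split into the natural-characteristic case $p=\ell$ and the cross-characteristic case $p\neq\ell$.

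For $p=\ell$ I would take $P=U$, the group of upper unitriangular matrices inside a fixed Borel $B$, which is the Sylow $\ell$-subgroup, and apply the trace criterion directly, using the Bruhat dichotomy $g\in B$ or $g\notin B$. If $g\in B$, every element of $gU$ is upper triangular with the same diagonal $(a,a^{-1})$ as $g$, so the trace is constant on $gU$; as $g\notin U$ forces $a\neq\pm 1$, we get $a+a^{-1}\neq\pm 2$ and no element of $gU$ is a $p$-element. If $g\notin B$, writing $g=\left(\begin{smallmatrix}a&b\\c&d\end{smallmatrix}\right)$ with $c\neq 0$ and $u=\left(\begin{smallmatrix}1&s\\0&1\end{smallmatrix}\right)$, the trace $\mathrm{tr}(gu)=a+d+cs$ runs over all of $\mathbb{F}_{q}$ as $s$ does; since $q\geq 3$ some value lies outside $\{2,-2\}$, giving an element of $gU$ of order coprime to $\ell$. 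Either way $gU$ meets the set of $p'$-elements, a contradiction, so the case $p=\ell$ is finished.

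The cross-characteristic case $p\neq\ell$ is where the real difficulty lies and is the step I expect to be the main obstacle. Now $p\mid q-1$ or $p\mid q+1$, and $P$ is cyclic of order $p^{a}$ inside a maximal torus $T$ (split or non-split accordingly), the one exception being $p=2$ with $q$ odd, where $P$ is dihedral. A $p$-element is semisimple with eigenvalues a pair $\zeta,\zeta^{-1}$ of $p$-power roots of unity, so its trace lies in the small explicit set $S=\{\zeta+\zeta^{-1}:\zeta^{p^{a}}=1\}$. In the split case, taking $P=\langle\diag(\lambda,\lambda^{-1})\rangle$, the coset traces are $\mathrm{tr}(gt^{j})=a\lambda^{j}+d\lambda^{-j}$ for $j=0,\dots,p^{a}-1$, and the hypothesis forces all $p^{a}$ of these values into $S$. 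The plan is to show that this forces the off-diagonal entries of $g$ to vanish and in fact $g\in P$, making the coset trivial; since $\lambda$, $\zeta$ and the eigenvalues all live in the cyclic group $\mathbb{F}_{q}^{*}$ (or $\mathbb{F}_{q^{2}}^{*}$ in the non-split case), this is a vanishing-sum-of-roots-of-unity question about the function $j\mapsto a\lambda^{j}+d\lambda^{-j}$ on $\mathbb{Z}/p^{a}\mathbb{Z}$, and pinning down exactly when its values can all land in $S$ is the crux.

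For the remaining $p=2$, $q$ odd subcase I would exploit that $P$ is dihedral with its involutions coming from trace-$0$ elements, apply the cyclic analysis above to the rotation subgroup of $P$, and then use the reflection to again force $g\in P$. I expect the bounds $|P|\geq 5$ and $|P|\geq 16$ to enter essentially at this stage, ensuring there are enough coset traces that the only way to keep them all inside $S$ is the trivial coset $g\in P$. Assembling the three cases shows that no non-trivial coset of $P$ consists solely of elements of $p$-power order, whence $\psl(2,q)$ cannot give a positive answer to Question~\ref{question1}.
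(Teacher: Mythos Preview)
The paper does not prove this theorem; it is quoted verbatim as Theorem~2 of \cite{co} and no argument is given here. So there is no ``paper's own proof'' to compare your proposal against, and any assessment has to be on the internal merits of your sketch alone.

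On those merits: your natural-characteristic case $p=\ell$ is clean and complete. The cross-characteristic case, however, is not a proof but a plan with an explicitly flagged hole. You reduce to the assertion that the $p^{a}$ values $a\lambda^{j}+d\lambda^{-j}$ cannot all lie in the trace set $S$ unless $g\in P$, and then say that ``pinning down exactly when its values can all land in $S$ is the crux'' without actually pinning it down. That is the entire content of the theorem in this case, so as written the proposal is incomplete. Moreover, even the intended conclusion is misstated: forcing $b=c=0$ (i.e.\ $g\in T$) does not yet force $g\in P$, since $P$ is typically a proper subgroup of the torus $T$; you would still need to rule out diagonal $g\notin P$, and for such $g$ the coset $gP\subseteq T$ visibly contains non-$p$-elements only because $T$ is cyclic of order divisible by primes other than $p$---a separate (easy) step you have not mentioned. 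The $p=2$ dihedral subcase is likewise only gestured at. If you want a self-contained argument you will need to actually execute the counting/trace analysis in the $p\neq\ell$ case; as it stands, the proposal is a correct outline for $p=\ell$ and an unfinished strategy otherwise.
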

	In the remaining part of this article, we prove a similar partial result for the alternating groups $A_n$. For an element $x \in S_n$, let $supp(x)$ denote the support of $x$. 
	
	\begin{lemma}\label{Lemma_tau}
		Let $m \geq n$ and $x$ be a $n$-cycle in $S_m$. Then the cyclic subgroup $P$ generated by $x$ in $S_m$ acts regularly on $supp(x)$.
	\end{lemma}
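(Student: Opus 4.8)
The plan is to unwind the definition of a regular action. Recall that $P=\langle x\rangle$ acts regularly on $supp(x)$ means the action is both transitive and free (equivalently, the stabilizer of each point is trivial). Since $x$ is an $n$-cycle, $|P|=n=|supp(x)|$, so by the orbit-stabilizer theorem it suffices to establish transitivity; freeness will then follow automatically from the cardinality count. I would note at the outset that $P$ does preserve $supp(x)$ setwise, since any power of $x$ fixes every point outside $supp(x)$ and permutes the points inside it.

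First I would write $x=(i_1\;i_2\;\cdots\;i_n)$ for the $n$-cycle, so that $supp(x)=\{i_1,\ldots,i_n\}$ and $x(i_j)=i_{j+1}$ with indices read modulo $n$. The key step is to observe that $x^{k}(i_1)=i_{1+k}$ for every integer $k$, again reading the subscript modulo $n$. This single formula shows that the orbit of $i_1$ under $P$ is all of $\{i_1,\ldots,i_n\}=supp(x)$, giving transitivity. Hence $P$ acts transitively on a set of size $n$, and since $|P|=n$, each point-stabilizer is trivial, so the action is free as well; together these give regularity.

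The main obstacle here is essentially bookkeeping rather than any genuine difficulty: the one point that deserves care is confirming that $P$ actually acts \emph{on} $supp(x)$ (that $x$, and hence each $x^k$, maps $supp(x)$ to itself), and that the cyclic group $P$ has order exactly $n$. Both are immediate from the cycle structure of $x$, but stating them explicitly keeps the argument clean. I would close by remarking that this lemma is the natural starting point for analyzing Question~\ref{question1} for $A_n$: understanding how $\langle x\rangle$ moves $supp(x)$ is the first ingredient in controlling the cycle types that can occur in a coset $gP$.
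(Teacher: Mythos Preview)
Your proof is correct and follows essentially the same approach as the paper: both arguments write the cycle explicitly and observe that $x^k$ carries the first entry to the $(k{+}1)$st, yielding transitivity, with freeness then immediate (the paper says uniqueness is ``obvious,'' while you invoke orbit--stabilizer and the cardinality match $|P|=n=|supp(x)|$).
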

	
	\begin{proof}
		The proof is obvious for $n=1,2.$ So assume $ n \geq 3$. 
		Let $a_1$ and $a_2$ be distinct elements of $supp(x)$. We can write $x$ as 
		$x = (a_1\;b_1\; \dots\;b_{n-1})$ where  
		$b_j \in A$, $1 \leq j \leq n-1$. 
		Then $a_2 = b_k$ for some $k, 1 \leq k \leq n-1$ 
		and $\tau=x^k$ maps $a_1$ to $a_2$. The fact that $\tau$ is unique is obvious.
	\end{proof}
	
	For a prime $p$, let $P$ be a Sylow $p$-subgroup of $A_n$ of order $p$. Then $p\leq n\leq 2p-1$, and in that case a Sylow $p$-subgroup is generated by a $p$-cycle. We have the following:
	\begin{proposition}\label{sylow-p-p}
	Let $p\geq 3$ be a prime. Let $n\in \mathbb{N}$ and $p \leq n \leq 2p-1$. Then for $A_n$ and any Sylow $p$-subgroup $P$ of $A_n$, there does not exist a non-trivial coset of $P$ consisting entirely of $p$-cycles.
	\end{proposition}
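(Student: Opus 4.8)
The plan is to assume that some non-trivial coset $gP$ consists entirely of $p$-cycles and to derive a contradiction from a double-counting of fixed points. Since $p\le n\le 2p-1$ forces the $p$-part of $|A_n|$ to equal $p$, we may write $P=\langle x\rangle$ with $x$ a $p$-cycle, as already noted before the statement. Set $S=supp(x)$, so $|S|=p$, and let $F=\{1,\dots,n\}\setminus S$ be the set of points fixed by every element of $P$. The hypothesis $n\le 2p-1$ will be used precisely through the resulting inequality $|F|=n-p\le p-1<p=|S|$. For a permutation $\sigma$ let $\operatorname{fix}(\sigma)$ denote its number of fixed points. A $p$-cycle in $S_n$ fixes exactly $n-p$ points, so if every element of $gP=\{gx^i : 0\le i\le p-1\}$ is a $p$-cycle, then $\sum_{i=0}^{p-1}\operatorname{fix}(gx^i)=p(n-p)$.

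First I would compute the same sum by exchanging the order of summation: $\sum_{i=0}^{p-1}\operatorname{fix}(gx^i)=\sum_{j=1}^{n}\#\{i : x^i(j)=g^{-1}(j)\}$. For $j\in F$ we have $x^i(j)=j$ for all $i$, so the inner count is $p$ if $g(j)=j$ and $0$ otherwise. For $j\in S$, Lemma~\ref{Lemma_tau} tells us that $P$ acts regularly on $S$, so there is exactly one $i$ with $x^i(j)=g^{-1}(j)$ when $g^{-1}(j)\in S$, and none otherwise. Writing $a=\#\{j\in F : g(j)=j\}$ and $b=\#\{j\in S : g^{-1}(j)\in S\}$, this yields the single linear relation $pa+b=p(n-p)$, hence $b=p\bigl((n-p)-a\bigr)$.

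The decisive step is then purely arithmetic. Since $0\le a\le |F|=n-p$ and $0\le b\le |S|=p$, and $b$ is a multiple of $p$, we must have either $b=0$ (with $a=n-p$) or $b=p$ (with $a=n-p-1$). If $b=0$, then $g(S)\cap S=\emptyset$, so $g(S)\subseteq F$, which is impossible because $|g(S)|=p>n-p=|F|$. If $b=p$, then $g^{-1}(S)=S$, so $g$ preserves both $S$ and $F$ setwise; but $a=n-p-1$ says $g$ fixes all but one point of $F$, which is impossible, since a permutation of the finite set $F$ cannot fix exactly $|F|-1$ of its points. Either way we reach a contradiction, so no non-trivial coset of $P$ can consist entirely of $p$-cycles.

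I expect the main obstacle to be identifying the correct invariant rather than executing the estimates: once one decides to sum $\operatorname{fix}$ over the entire coset and to read off the divisibility constraint $b\equiv 0\pmod p$ together with $0\le b\le p$, the argument collapses to the two boundary cases above, and the hypothesis $n\le 2p-1$ enters in exactly the right place as $|F|<|S|$. The only delicate inputs are the regular-action statement of Lemma~\ref{Lemma_tau}, which guarantees that each value in $S$ is attained by $x^i(j)$ for a unique $i$, and the elementary observation that a bijection of a finite set fixing all but one of its points must fix that point as well.
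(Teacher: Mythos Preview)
Your proof is correct and takes a genuinely different route from the paper's. The paper argues constructively: fixing $P=\langle(1\,2\,\cdots\,p)\rangle$ with support $A$, it takes a $p$-cycle $x\notin P$ with support $B$ and, according to whether $A=B$, $|A\cap B|=1$, or $2\le|A\cap B|\le p-1$, explicitly exhibits (via Lemma~\ref{Lemma_tau}) some $\tau\in P$ for which $x\tau$ has a fixed point in $A$, is a $(2p-1)$-cycle, or has a cycle of length strictly less than $p$, respectively. Your argument instead sums fixed-point counts over the whole coset and extracts the divisibility constraint $p\mid b$, reducing the problem to two boundary cases dispatched by the size inequality $|F|<|S|$ and the pigeonhole observation that a bijection of $F$ cannot move exactly one point. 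The paper's approach is more concrete---it actually names the offending element of the coset---while yours is cleaner, avoids the three-way case split on $|A\cap B|$, and makes the role of the hypothesis $n\le 2p-1$ completely transparent as the single inequality $|F|<|S|$.
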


	\begin{proof}
			It is enough  to prove this for $n=2p-1$. 
			Also since all Sylow $p$-subgroups are conjugate, 
			it is enough to prove it for a particular Sylow $p$-subgroup $P$. Let $P$ be the subgroup generated by the $p$-cycle $(1\;2\;3\; \dots \; p)$. Let $A$ denote the set $\{1, 2, \dots, p\}$. It is enough to consider the left cosets $xP$ where $x\notin P$ is a $p$-cycle. Let $B=supp(x)$. We will show that there is an element $\tau \in P$ such that $x \tau$ is not a $p$-cycle. We do this case-by-case.
			
			\medskip
			
			\textbf{\underline{Case I}:} Let $A=B$. Then we can write $x = (1\;a_1\; a_2\; \dots\; a_{p-1})$ 
			where $a_i$'s are distinct elements of $A\setminus \{1\}$. 
			By Lemma \ref{Lemma_tau}, we can choose an element $\tau \in P$ which maps $a_1$ to $1$. 
			Then $x \tau$ fixes $a_1$ and hence is not a $p$-cycle.
			
			\textbf{\underline{Case II}:} Suppose that $|A \cap B|=1$. Then $x=(X_1 \;\dots\;X_{p-1}\;a)$ where $X_i$'s are not in $A$ and $a\in A$. Let $a_1$ be an element in $A\setminus \{a\}$. Using Lemma~\ref{Lemma_tau} we can choose a $\tau\in P$ which maps $a_1$ to $a$. Then $x\tau$ is clearly a $(2p-1)$-cycle and we are done.
			
			\textbf{\underline{Case III}:} Suppose that $2 \leq |A \cap B| \leq p-1$. Without loss of generality, we can choose $x=(X\;\dots \; a)$ where $a\in A$ and $X\notin A$. Let $a_1$ be the first element of $A$ that appears to the right of $X$ in $x$. Then $a_1\neq a$. By Lemma~\ref{Lemma_tau}, we can choose $\tau \in P$ which takes $a_1$ to $a$. Then the cycle of $a_1$ in the disjoint cycle decomposition of $x\tau$ is given by $(a_1\;X\;\dots\;X_t)$ where $X_t$ is the symbol that appears just before $a_1$ in $x$. This cycle has length less than $p$. Thus $x\tau$ is not a $p$-cycle. This completes the proof.
	\end{proof}
	
	Let $n$ be such that $2p\leq n\leq 3p-1$ and $P$ be a Sylow $p$-subgroup of $A_n$. Then $|P|=p^2$ and $P$ is generated by two $p$-cycles whose supports do not intersect. In fact, any non-identity element in $A_n$ whose order is  a power of $p$ is a $p$-cycle or a disjoint product of two $p$-cycles. We have the following:
	\begin{lemma}\label{sylow-p-p^2-p}
		Let $p\geq 3$ be a prime, and  $2p \leq n \leq 3p-1$. Then for any Sylow $p$-subgroup $P$ of $A_n$ and a $p$-cycle $x\notin P$, $xP$ contains an element whose order is not equal to $p$.
	\end{lemma}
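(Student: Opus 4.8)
The plan is to show that a single ``dial'' suffices: among the twists $x c_1^{\,i}$ by powers of just one generator of $P$, at least one must have order different from $p$. First I would borrow the reductions from the proof of Proposition~\ref{sylow-p-p}: it is enough to treat the largest case $n=3p-1$, and since all Sylow $p$-subgroups are conjugate I may fix $P=\langle c_1,c_2\rangle$ with $c_1=(1\;2\;\cdots\;p)$ and $c_2=(p+1\;\cdots\;2p)$, writing $A_1=supp(c_1)$, $A_2=supp(c_2)$, and $F=\{2p+1,\dots,3p-1\}$ for the common fixed points, so $|F|=p-1$. As recorded just before the lemma, every $p$-power-order element of $A_n$ is a $p$-cycle or a disjoint product of two $p$-cycles; since $xP$ is a nontrivial coset it misses the identity, so an element of $xP$ has order $\neq p$ as soon as its support has size different from both $p$ and $2p$. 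Because $|F|<p$, the $p$-cycle $x$ must meet $A_1$ or $A_2$; after possibly swapping the two blocks I may assume $\beta_1:=|supp(x)\cap A_1|\geq 1$, and I will only use $\tau=c_1^{\,i}$.

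The key computation is that $x c_1^{\,i}$ agrees with $x$ off $A_1$ and acts as $z\mapsto x(c_1^i(z))$ on $A_1$. Hence its support splits as the $p-\beta_1$ points of $supp(x)\setminus A_1$ (moved for every $i$) together with the points of $A_1$ that it moves. Writing $f_i$ for the number of fixed points of $x c_1^{\,i}$ lying in $A_1$, this yields $|supp(x c_1^{\,i})|=2p-\beta_1-f_i$ for $i\neq 0$. This single identity does most of the work. Since $\beta_1\geq 1$ and $f_i\geq 0$, the support is always strictly smaller than $2p$, so $x c_1^{\,i}$ is \emph{never} a disjoint product of two $p$-cycles; thus I only need to rule out its being a $p$-cycle for all $i$.

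For the counting step I would analyse $f_i$. A fixed point $z\in A_1$ of $x c_1^{\,i}$ satisfies $c_1^{\,i}(z)=x^{-1}(z)$, which forces $w:=x^{-1}(z)\in A_1$, i.e. $w$ lies in the set $I$ of ``internal'' points $w\in supp(x)\cap A_1$ with $x(w)\in A_1$; by the regular action of $\langle c_1\rangle$ on $A_1$ (Lemma~\ref{Lemma_tau}) the exponent $i$ is then uniquely determined (and nonzero, as $x(w)\neq w$). This sets up a bijection showing $\sum_{i=1}^{p-1}f_i=|I|$. When $1\leq\beta_1\leq p-1$, the transitivity of the single $p$-cycle $x$ prevents $supp(x)\cap A_1$ from being $x$-invariant, so $x$ has at least one ``exit'' and $|I|\leq\beta_1-1\leq p-2$. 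If every $x c_1^{\,i}$ were a $p$-cycle we would need $f_i=p-\beta_1$ for all $i$, whence $\sum_{i=1}^{p-1}f_i=(p-1)(p-\beta_1)\geq p-1$, contradicting $\sum f_i=|I|\leq p-2$. So some $x c_1^{\,i}$ has support outside $\{p,2p\}$, hence order $\neq p$. The boundary case $\beta_1=p$ (that is, $supp(x)=A_1$, so $x\notin P$ forces $x\neq c_1^{-i}$) is handled identically: here $|supp(x c_1^{\,i})|=p-f_i$ with $\sum_{i=1}^{p-1}f_i=|I|=p>0$, so some $f_i\geq 1$ makes the support lie strictly between $0$ and $p$, again forcing order $\neq p$.

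The step I expect to be the real obstacle is the fixed-point bookkeeping inside $A_1$: establishing cleanly the identity $\sum_{i=1}^{p-1}f_i=|I|$ via Lemma~\ref{Lemma_tau}, and proving $|I|\leq\beta_1-1$ in the generic range (equivalently, that a $p$-cycle meeting $A_1$ properly must leave it at least once). Once these are in hand, everything is forced by the support formula $|supp(x c_1^{\,i})|=2p-\beta_1-f_i$ together with the crude but decisive inequality $(p-1)(p-\beta_1)\geq p-1>p-2\geq|I|$, so I would not expect any further case analysis beyond the single split $\beta_1<p$ versus $\beta_1=p$.
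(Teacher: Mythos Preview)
Your argument is correct, and the fixed-point bookkeeping you flag as the potential obstacle goes through exactly as you outline: the bijection between fixed points of $xc_1^{\,i}$ in $A_1$ and the ``internal'' set $I$ is clean via Lemma~\ref{Lemma_tau}, and the exit argument for a single $p$-cycle gives $|I|\leq\beta_1-1$ when $\beta_1<p$.

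The paper, however, takes a much shorter route: it simply reduces to Proposition~\ref{sylow-p-p}. Once one knows (as you also argue) that $supp(x)$ must meet one of the two blocks, say $A_1$, the union $supp(x)\cup A_1$ has size at most $2p-1$; viewing $x$ and $c_1$ inside the alternating group on that set, Proposition~\ref{sylow-p-p} directly supplies a $\tau\in\langle c_1\rangle$ with $x\tau$ not a $p$-cycle, and in a group on $\leq 2p-1$ letters this already forces order $\neq p$. So the paper treats the lemma as an immediate corollary of the one-generator case, whereas you bypass Proposition~\ref{sylow-p-p} entirely with a self-contained counting argument. Your approach is longer but more uniform---only the single split $\beta_1<p$ versus $\beta_1=p$, rather than the paper's case analysis on which pair of blocks contains $supp(x)$---and the identity $\sum_{i=1}^{p-1}f_i=|I|$ together with the inequality $(p-1)(p-\beta_1)>\beta_1-1$ gives quantitative information (for instance, an upper bound on how many of the twists $xc_1^{\,i}$ can be $p$-cycles) that the bare reduction does not.
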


	\begin{proof}
		It is enough to prove the result for $A_{3p-1}$. Without loss of generality, we can assume $P=\langle (1\;2\;\cdots\;p),\;(p+1\;p+2\;\cdots\;2p)\rangle$. Let $x\notin P$ be a $p$-cycle. Let $A=\{1,2,\ldots, p\}$, $B=\{p+1,p+2,\ldots, 2p\}$, and $C=\{2p+1,2p+2,\ldots, 3p-1\}$. Let $X$ be the support of $x$. If $X\subseteq A\cup C$ or $X\subseteq B\cup C$ or $X\subseteq A\cup B$ we are done by the previous proposition. If not, $X$ has at least one element from each of the set $A,B,C$. Consider the set $Y=X\cap (B\cup C)$. Since $X$ has at least one element from $A$, $|Y|\leq p-1$. Thus, this case also reduces to the set-up considered in the previous proposition. We conclude that $xP$ contains an element whose  order is not equal to $p$.

	\end{proof}

	\begin{proposition}
		Let $p\geq 3$ be a prime. Let $P$ be a Sylow $p$-subgroup of $A_{2p}$. Then every non-trivial coset of $P$ contains an element whose order is not equal to $p$. 
	\end{proposition}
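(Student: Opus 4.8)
The plan is to reduce everything to the two preceding results, Lemma~\ref{Lemma_tau} and Lemma~\ref{sylow-p-p^2-p}. First I would fix the Sylow subgroup: since all Sylow $p$-subgroups of $A_{2p}$ are conjugate and $|P|=p^2$, I may take $P=\langle\sigma,\tau\rangle$ with $\sigma=(1\;2\;\cdots\;p)$ and $\tau=(p+1\;p+2\;\cdots\;2p)$, and set $A=\{1,\dots,p\}$, $B=\{p+1,\dots,2p\}$. The point of this choice is that every element of $P$ fixes $A$ and $B$ setwise and every non-identity element of $P$ has order $p$. Now let $gP$ be a non-trivial coset, so $g\notin P$. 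Since $g$ itself lies in $gP$, the conclusion is immediate unless $\mathrm{ord}(g)=p$, so this is the only case to analyze. Because $p\geq 3$ forces $p^2>2p$, every element of $p$-power order in $A_{2p}$ has all cycle lengths in $\{1,p\}$; hence an order-$p$ element is either a $p$-cycle or a product of two disjoint $p$-cycles, which I will call a double $p$-cycle.

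If $g$ is a $p$-cycle, then $g\notin P$ is a $p$-cycle and the conclusion follows at once from Lemma~\ref{sylow-p-p^2-p}, which applies since $n=2p$ satisfies $2p\leq n\leq 3p-1$. The substantive case, and what I expect to be the main obstacle, is when $g$ is a double $p$-cycle: here Lemma~\ref{sylow-p-p^2-p} cannot be invoked directly because its hypothesis demands a $p$-cycle representative. The key idea is to multiply $g$ by a suitable power of $\sigma$ so as to create a fixed point, thereby pushing the coset into the reach of Lemma~\ref{sylow-p-p^2-p}. First I would note that a double $p$-cycle cannot interchange $A$ and $B$: any permutation sending $A$ to $B$ and $B$ to $A$ has only even-length cycles, since each of its cycles alternates between the two blocks and so must have even length to close up, whereas a double $p$-cycle consists of two cycles of odd length $p$. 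Consequently $g(A)\cap A\neq\emptyset$, so I may choose $x\in A$ with $g^{-1}(x)\in A$; as $g$ is fixed-point-free, $x\neq g^{-1}(x)$. Using the regular action of $\langle\sigma\rangle$ on $A$ provided by Lemma~\ref{Lemma_tau}, there is an $i$ with $\sigma^i(x)=g^{-1}(x)$, and then $g\sigma^i$ fixes $x$.

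Finally I would close the argument as follows. The element $g\sigma^i$ lies in $gP\setminus P$ and has a fixed point, so it is not a double $p$-cycle. If its order differs from $p$ we are done immediately; otherwise it is an order-$p$ element possessing a fixed point, hence necessarily a $p$-cycle, and applying Lemma~\ref{sylow-p-p^2-p} to the coset $gP=(g\sigma^i)P$ again produces an element of order different from $p$. I expect the only delicate points to be the bookkeeping that every element of $P$ preserves the blocks $A$ and $B$ (needed to guarantee $g^{-1}(x)\in A$ and that $\sigma^i$ moves only points of $A$) and the parity argument ruling out the block-swapping possibility; once these are established, the passage to Lemma~\ref{sylow-p-p^2-p} is routine. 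Note that this single fixed-point construction handles every double $p$-cycle uniformly, whether or not $g$ preserves each block, since $g(A)\cap A\neq\emptyset$ holds in all such cases.
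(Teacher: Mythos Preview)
Your proof is correct and follows essentially the same strategy as the paper: fix the standard Sylow $p$-subgroup, dispose of the $p$-cycle case via Lemma~\ref{sylow-p-p^2-p}, and in the double $p$-cycle case use a parity argument together with Lemma~\ref{Lemma_tau} to multiply by a power of $\sigma$ that creates a fixed point, thereby reducing again to Lemma~\ref{sylow-p-p^2-p}. Your treatment is in fact slightly cleaner than the paper's, since the observation that a block-interchanging permutation has only even cycles gives $g(A)\cap A\neq\emptyset$ uniformly and so avoids the paper's separate subcase for when one of the two $p$-cycles is supported exactly on $A$ or on $B$.
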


	\begin{proof}
		
		We take $P=\langle (1\;2\;\cdots\;p),\;(p+1\;p+2\;\cdots\;2p)\rangle$. Further, we set $A=\{1,2,\ldots, p\}, B=\{p+1,p+2,\ldots ,2p\}$. If $x\notin P$ be a $p$-cycle then we are done by the previous lemma.
		
		Let $x$ be a disjoint product of two $p$-cycles. We write $x=yz$ where $y$ and $z$ are $p$-cycles. Let $Y=supp(y)$ and $Z=supp(z)$.
		If $Y=A$ or $Y=B$ (then $Z=B$ or $Z=A$) we are done by the previous lemma. Let us now assume that $Y$ contains at least one element each from $A$ and $B$. The same then holds for $Z$. Since $p$ is odd, the cycle $y$ must have symbols from $A$ which are adjacent or, symbols from $B$ which are adjacent. Without loss of generality, assume $y$ takes $a_1$ to $a_2$ where $a_1,a_2\in A$. Using  Lemma~\ref{Lemma_tau}, there exists $\tau \in \langle (1\;2\;\cdots\;p) \rangle \subseteq P$ which takes $a_2$ to $a_1$. Then $x\tau$ fixes $a_2$ and thus $|supp(x\tau)|<2p$. We conclude that  $x\tau$ is not a disjoint product of two $p$-cycles. If $x\tau$ is a not a $p$-cycle then $x\tau$ cannot have order $p$. If $x\tau$ is a $p$-cycle then we conclude by the previous lemma that $x\tau P=xP$ does not contain an element of order $p$. This completes the proof.
	\end{proof}
		
	Thus, in the spirit of Theorem~\ref{counter-example-PSL}, we have proved that when $P$ is a Sylow $p$-subgroup of order $p$ of a group $G$, then $G$ cannot be $A_n$. Further, if $P$ has order $p^2$, then $G$ cannot be $A_{2p}$. Once again GAP computations indicate that $A_{2p}$ can be replaced by any $A_n$ where $2p\leq n\leq 3p-1$. Partial evidence is provided by Lemma~\ref{sylow-p-p^2-p}. In \cite{co}, the author has given computational evidence and mentions that $|P|=5$ might be the only possibility which gives a positive answer to Question~\ref{question1}. Theorem~\ref{sylow-p-p} provides more evidence in this direction.
	\subsection*{Acknowledgment}
	We thank Prof. Pellegrini and Prof. Shumyatsky for going through an earlier version of the manuscript and providing helpful comments and suggestions. We thank Prof. Amit Kulshrestha and Prof. Chetan Balwe for their support. We also thank Harish Kishnani and Dr. Gurleen Kaur for their interest in this work.
	
\end{document}